\documentclass[11pt, a4paper, reqno]{amsart}
\author{Soichiro Fujii}
\thanks{The author is supported by ERATO HASUO Metamathematics for Systems Design Project (No.~JPMJER1603), JST}
\date{\today}
\address{Research Institute for Mathematical Sciences, Kyoto University \\ 
Kyoto 606-8502, Japan}
\title{Introduction to universal algebra and clones}
\keywords{Universal algebra, equational logic, clones}
\subjclass[2010]{03C05, 08B05}
\email{s.fujii.math@gmail.com}
\usepackage[centering, totalwidth=380pt, totalheight=600pt]{geometry}
\usepackage[pagebackref=true, colorlinks]{hyperref}
\usepackage{amsfonts, amssymb, amsthm}
\usepackage{comment}
\usepackage{bussproofs}
\usepackage{thmtools}
\usepackage{tikz}
\usetikzlibrary{arrows.meta,decorations.pathreplacing,decorations.markings,shapes,calc}
\usetikzlibrary{matrix,arrows,decorations.pathmorphing,backgrounds,decorations.markings,positioning}
\tikzset{2cell/.style={-implies,double,double equal sign distance,shorten 
>=2pt, shorten <=3pt}}
\tikzset{2cellshort/.style={-implies,double,double equal sign distance,shorten 
>=4pt, shorten <=5pt}}
\tikzset{2cellr/.style={implies-,double,double equal sign distance,shorten 
>=3pt, shorten <=2pt}}
\tikzset{3cell/.style={-implies,double,double distance=2.5pt,shorten >=2pt, 
shorten <=3pt}}
\tikzset{labelsize/.style={font=\scriptsize}}
\tikzset{string/.style={very thick}}
\tikzset{
  pto/.style={->,postaction={decorate},
    decoration={
        markings,
        mark=at position 0.5 with {\arrow{|}}}
  },
}



\declaretheorem[style=plain,numberwithin=section,name=Theorem]{thm}

\declaretheorem[style=plain,sibling=thm,name=Proposition]{proposition}

\declaretheorem[style=definition,qed=$\blacksquare$,sibling=thm,name=Definition]{definition}

\mathchardef\mhyphen="2D

\newcommand{\Set}{\mathbf{Set}}

\newcommand{\monoid}[1]{\mathsf{#1}}
\newcommand{\End}{\monoid{End}}
\newcommand{\Endcl}[1]{\monoid{End}(#1)}

\newcommand{\enrich}[2]{{\langle #1, #2\rangle}}

\newcommand{\F}{\mathbf{F}}

\newcommand{\NN}{\mathbb{N}}
\newcommand{\pres}[2]{{{\langle\, #1\, |\, #2 \,\rangle}}}
\newcommand{\interp}[1]{{[\![#1]\!]}}
\newcommand{\interpp}[1]{{[\![#1]\!]'}}
\newcommand{\group}{\mathrm{Grp}}

\newcommand{\clo}{\mathrm{Clo}}

\makeatletter
\DeclareRobustCommand{\rvdots}{%
  \vbox{
    \baselineskip4\p@\lineskiplimit\z@
    \kern-\p@
    \hbox{.}\hbox{.}\hbox{.}
  }}
\makeatother

\makeatletter
\newcommand{\pto}{}
\newcommand{\pgets}{}
\DeclareRobustCommand{\pto}{\mathrel{\mathpalette\p@to@gets\to}}
\DeclareRobustCommand{\pgets}{\mathrel{\mathpalette\p@to@gets\gets}}
\newcommand{\p@to@gets}[2]{%
  \ooalign{\hidewidth$\m@th#1\mapstochar\mkern5mu$\hidewidth\cr$\m@th#1\longrightarrow$\cr}%
}
\makeatother

\newcommand{\op}{{\mathrm{op}}}

\newcommand{\defemph}[1]{\textbf{#1}}

\newcommand{\str}[1]{\mathbf{#1}}

\newcommand{\law}[1]{\mathbb{#1}}

\begin{document}
\begin{abstract}
The purpose of this note is to provide a gentle introduction to basic universal 
algebra and (abstract) clones.
\end{abstract}
\maketitle
\section{Introduction}
In almost every field of pure and applied mathematics, 
\emph{algebras} (in a broad sense) arise quite naturally 
in one way or another.
An algebra, typically, is a set equipped with 
a family of operations on it.
So for example the symmetric group of degree five $\mathfrak{S}_5$
and the ring of integers $\mathbb{Z}$ are both algebras.
Structural similarities between important algebras have led to 
the introduction and study of various \emph{types of algebras},
such as 
monoids, groups, rings, vector spaces over a field, lattices, 
Boolean algebras, and Heyting algebras.
A type of algebras is normally specified by a family of operations
and a family of equational axioms. 
We shall call such a specification of a type of algebras an \emph{algebraic theory}.

Subsequently, 
various authors have set out to develop a background theory, or a  {metatheory} for a certain type of algebraic theories.
The most famous classical example is Birkhoff's \emph{universal algebra}~\cite{Birkhoff_abst_alg}.\footnote{Other examples of metatheories include those of non-symmetric and symmetric operads \cite{May_loop}, PROs and PROPs \cite{MacLane_cat_alg}, generalised operads 
\cite{Burroni_T_cats,Kelly_club_data_type,Hermida_representable,Leinster_book}, and monads \cite{Eilenberg_Moore,Linton_equational}; cf.~\cite{Fujii_thesis,Fujii_unified}.}
By working at this level of generality, one can prove theorems for various types of algebras
once and for all; for instance, the homomorphism theorems
in universal algebra (see e.g., \cite[Section II.6]{Burris_Sankappanavar}) 
generalise the homomorphism theorems for groups
to monoids, rings, lattices, etc.
A metatheory also provides a method to relate different types of algebras, by means of morphisms between algebraic theories.

In this note, we explain the basics of universal algebra. 
We shall confine ourselves to the most basic definitions; we focus on \emph{presentations of equational theories}, the type of algebraic theories universal algebra deals with. 
We then describe a presentation independent version of them, namely \emph{(abstract) clones}.
This note is based on Sections 2.1 and 2.2 of the author's thesis \cite{Fujii_thesis}.

\section{Universal algebra}
\label{sec:univ_alg}
Universal algebra~\cite{Birkhoff_abst_alg} deals with types of algebras
defined by finitary operations and equations between them.
As a running example, let us consider 
\emph{groups}.
A group may be defined as a set $G$
equipped with an element $e^G\in G$ (the unit), and 
two functions $i^G\colon G\longrightarrow G$ (the inverse) and 
$m^G\colon G\times G\longrightarrow G$ (the multiplication),
satisfying the following axioms:
\begin{itemize}
\item for all $g_1\in G$, $m^G(g_1,e^G)=g_1$ (the right unit axiom);
\item for all $g_1\in G$, $m^G(g_1,i^G(g_1))=e^G$ (the right inverse axiom);
\item for all $g_1,g_2,g_3\in G$, $m^G(m^G(g_1,g_2),g_3)=m^G(g_1,m^G(g_2,g_3))$ 
(the associativity axiom).\footnote{From these three axioms it follows that 
for all $g_1\in G$, $m^G(e^G,g_1)=g_1$ (the left unit axiom) and 
$m^G(i^G(g_1),g_1)=e^G$ (the left inverse axiom) hold.}
\end{itemize}
This definition of group turns out to be an instance of 
the notion of \emph{presentation of an equational theory},
one of the most fundamental notions in universal algebra.

\medskip

First we introduce the notion of \emph{graded set}, which provides a convenient language for our exposition.
\begin{definition}
\label{def:graded_set}
\begin{enumerate}
\item An \defemph{($\NN$-)graded set} $\Gamma$ is a family 
$\Gamma=(\Gamma_n)_{n\in\NN}$ of sets
indexed by natural numbers $\NN=\{0,1,2,\dots\}$.
By an \defemph{element of $\Gamma$} we mean an element of the 
set $\coprod_{n\in\NN} \Gamma_n=\{\,(n,\gamma)\mid n\in\NN, \gamma\in 
\Gamma_n\,\}$.
We write $x\in\Gamma$ iff $x$ is an element of $\Gamma$.
\item If $\Gamma=(\Gamma_n)_{n\in\NN}$ and $\Gamma'=(\Gamma'_n)_{n\in\NN}$
are graded sets, then a \defemph{morphism of graded sets} $f\colon 
\Gamma\longrightarrow
\Gamma'$ is a family of functions $f=(f_n\colon \Gamma_n\longrightarrow 
\Gamma'_n)_{n\in\NN}$.\qedhere
\end{enumerate}
\end{definition}

We can routinely extend the basic notions of set theory to graded sets.
For example, we say that a graded set $\Gamma'$ is a \defemph{graded subset}
of a graded set $\Gamma$ (written as $\Gamma'\subseteq \Gamma$)
iff for each $n\in\NN$, $\Gamma'_n$ is a subset of $\Gamma_n$.
Given arbitrary graded sets $\Gamma$ and $\Gamma'$, their 
\defemph{cartesian product} (written as $\Gamma\times\Gamma'$) is defined by 
$(\Gamma\times\Gamma')_n=\Gamma_n\times\Gamma'_n$ for each $n\in\NN$.
An \defemph{equivalence relation} on a graded set $\Gamma$ is 
a graded subset $R\subseteq \Gamma\times\Gamma$ such that each $R_n\subseteq
\Gamma_n\times\Gamma_n$ is an equivalence relation on the set $\Gamma_n$.
Given such an equivalence relation $R$ on $\Gamma$, we can form the 
\defemph{quotient graded set} $\Gamma/R$ by 
setting $(\Gamma/R)_n=\Gamma_n/R_n$, the quotient set of $\Gamma_n$
with respect to $R_n$.
These notions will be used below.

A graded set can be seen as a \emph{(functional) signature}.
That is, we can regard a graded set $\Sigma$ as the signature whose set of 
$n$-ary functional symbols is given by $\Sigma_n$ for each $n\in\NN$.
We often use the symbol $\Sigma$ to denote a graded set when we want to
emphasise this aspect of graded sets, as in the following definition. 

\begin{definition}
\label{def:Sigma_alg}
Let $\Sigma$ be a graded set.
\begin{enumerate}
\item A \defemph{$\Sigma$-algebra} $\str{A}$ is a set $A$ equipped with, for each $n\in\NN$ and $\sigma\in\Sigma_n$,
a function $\interp{\sigma}^\str{A}\colon A^n\longrightarrow A$
called the \defemph{interpretation of $\sigma$}.\footnote{Note that we allow the set $A$ to be empty. In traditional universal algebra the underlying set of a $\Sigma$-algebra is usually required to be nonempty.}
We write such a $\Sigma$-algebra $\str{A}=(A,(\interp{\sigma}^\str{A})_{n\in\NN,\sigma\in\Sigma_n})$
simply as $(A,\interp{-}^\str{A})$.
We sometimes omit the superscript in $\interp{-}^\str{A}$.
\item If $\str{A}=(A,\interp{-}^\str{A})$
and $\str{B}=(B,\interp{-}^{\str{B}})$
are $\Sigma$-algebras,
then a \defemph{$\Sigma$-homomorphism from 
$\str{A}$ to 
$\str{B}$}
is a function $f\colon A\longrightarrow B$ such that for any $n\in\NN$,
$\sigma\in\Sigma_n$ and $a_1,\dots,a_n\in A$,
\[
f(\interp{\sigma}^\str{A}(a_1,\dots,a_n))=\interp{\sigma}^\str{B}(f(a_1),\dots,f(a_n))
\]
holds (that is, the diagram
\[
\begin{tikzpicture}[baseline=-\the\dimexpr\fontdimen22\textfont2\relax ]
      \node (TL) at (0,2)  {$A^n$};
      \node (TR) at (3,2)  {$B^n$};
      \node (BL) at (0,0) {$A$};
      \node (BR) at (3,0) {$B$};
      \draw[->] (TL) to node[auto,labelsize](T) {$f^n$} (TR);
      \draw[->]  (TR) to node[auto,labelsize] {$\interp{\sigma}^\str{B}$} (BR);
      \draw[->]  (TL) to node[auto,swap,labelsize] {$\interp{\sigma}^\str{A}$} (BL);
      \draw[->] (BL) to node[auto,labelsize](B) {$f$} (BR);
\end{tikzpicture} 
\]
commutes).\qedhere
\end{enumerate}
\end{definition}

As an example, let us consider the graded set $\Sigma^\group$
defined as $\Sigma^\group_0=\{e\}$, $\Sigma^\group_1=\{i\}$,
$\Sigma^\group_2=\{m\}$ and $\Sigma^\group_n=\emptyset$ for all $n\geq 3$.
Then the structure of a group is given by
that of a $\Sigma^\group$-algebra.
Note that to give an element $e^G\in G$ is equivalent to give a 
function $\interp{e}\colon 1\longrightarrow G$ where $1$ is 
a singleton set, and that for any set $G$, $G^0$ is a singleton set.
Also, between groups, the notions of group homomorphism and 
$\Sigma^\group$-homomorphism coincide.

However, not all $\Sigma^\group$-algebras are groups;
for a $\Sigma^\group$-algebra to be a group, the interpretations
must satisfy the group axioms.
Notice that all group axioms are {equations} between
certain expressions built from
variables and operations.
This is the fundamental feature shared by all types of algebras
expressible in universal algebra.
The following notion of $\Sigma$-term defines {``expressions
built from variables and operations''}
relative to arbitrary graded sets $\Sigma$.

\begin{definition}
\label{def:Sigma_term}
Let $\Sigma$ be a graded set.
The graded set $T(\Sigma)=(T(\Sigma)_n)_{n\in\NN}$ of \defemph{$\Sigma$-terms}
is defined inductively as follows.
\begin{enumerate}
\item For each $n\in\NN$ and  $i\in\{1,\dots,n\}$, 
\[
x_i^{(n)}\in T(\Sigma)_n.
\]
We sometimes omit the superscript and write $x_i$ for $x_i^{(n)}$.
\item For each $n,k\in\NN$, $\sigma\in\Sigma_k$ and $t_1,\dots,t_k\in 
T(\Sigma)_n$,
\[
\sigma(t_1,\dots,t_k)\in T(\Sigma)_n.
\]
When $k=0$, we usually omit the parentheses in $\sigma()$
and write instead as $\sigma$.\qedhere
\end{enumerate}
\end{definition}

An immediate application of the inductive nature of the above
definition of $\Sigma$-terms is the canonical extension of the interpretation 
function $\interp{-}$
of a $\Sigma$-algebra from $\Sigma$ to $T(\Sigma)$.

\begin{definition}
\label{def:interp_Sigma_term}
Let $\Sigma$ be a graded set and 
$\str{A}=(A,\interp{-}^\str{A})$ be a 
$\Sigma$-algebra.
We define the \defemph{interpretation} $\interpp{-}^\str{A}$ of $\Sigma$-terms
recursively as follows.
\begin{enumerate}
\item For each $n\in\NN$ and $i\in\{1,\dots,n\}$,
\[
\interpp{x^{(n)}_i}^\str{A}\colon A^n\longrightarrow A
\]
is the $i$-th projection $(a_1,\dots,a_n)\longmapsto a_i$.
\item For each $n,k\in\NN$, $\sigma\in\Sigma_k$ and $t_1\dots,t_k\in 
T(\Sigma)_n$,
\[
\interpp{\sigma(t_1,\dots,t_k)}^\str{A}\colon A^n\longrightarrow A
\] 
maps $(a_1,\dots,a_n)\in A^n$ to 
$\interp{\sigma}^\str{A}(\interpp{t_1}^\str{A}(a_1,\dots,a_n),\dots,
\interpp{t_k}^\str{A}(a_1,\dots,a_n))$;
that is, the function $\interpp{\sigma(t_1,\dots,t_k)}^\str{A}$ is the following composite:
\[
\begin{tikzpicture}[baseline=-\the\dimexpr\fontdimen22\textfont2\relax ]
      \node (1) at (0,0)  {$A^n$};
      \node (2) at (3,0)  {$A^k$};
      \node (3) at (5,0) {$A.$};
      \draw[->] (1) to node[auto,labelsize]{$\langle 
      \interpp{t_1}^\str{A},\dots,\interpp{t_k}^\str{A}\rangle$} (2);
      \draw[->] (2) to node[auto,labelsize] {$\interp{\sigma}^\str{A}$} (3);
\end{tikzpicture} 
\]
\end{enumerate}
Note that for any $n\in\NN$ and $\sigma\in\Sigma_n$,
$\interp{\sigma}^\str{A}=\interpp{\sigma(x^{(n)}_1,\dots,x^{(n)}_n)}^\str{A}$.
Henceforth, for any $\Sigma$-term $t$
we simply write $\interp{t}^\str{A}$ for $\interpp{t}^\str{A}$ defined above.
\end{definition}

\begin{definition}
\label{def:Sigma_eq}
Let $\Sigma$ be a graded set.
An element of the graded set $T(\Sigma)\times T(\Sigma)$ is called a 
\defemph{$\Sigma$-equation}.
We write a $\Sigma$-equation $(n,(t,s))\in T(\Sigma)\times T(\Sigma)$ (that is,
$n\in\NN$ and $t,s\in T(\Sigma)_n$) as $t\approx_n s$ or $t\approx s$.
\end{definition}

\begin{definition}
\label{defn:univ_alg_pres_eq_thy}
A \defemph{presentation of an equational theory} $\pres{\Sigma}{E}$ 
is a pair consisting of:
\begin{itemize}
\item a graded set $\Sigma$ of \defemph{basic operations}, 
and
\item a graded set $E\subseteq T(\Sigma)\times T(\Sigma)$
of \defemph{equational axioms}.\qedhere
\end{itemize}
\end{definition}

\begin{definition}
\label{defn:univ_alg_model}
Let $\pres{\Sigma}{E}$ be a presentation of an equational theory.
\begin{enumerate}
\item A \defemph{model of $\pres{\Sigma}{E}$}, or a \defemph{$\pres{\Sigma}{E}$-model}, is a $\Sigma$-algebra
$\str{A}$
such that for any $t\approx_n s\in E$,
$\interp{t}^\str{A}=\interp{s}^\str{A}$ holds.
\item A \defemph{homomorphism}
between models of $\pres{\Sigma}{E}$ is just
a $\Sigma$-homomorphism between the corresponding
$\Sigma$-algebras.\qedhere
\end{enumerate}
\end{definition}

Consider the presentation of an equational theory 
$\pres{\Sigma^\group}{E^\group}$, where 
\[E^\group_1=\{\,m(x^{(1)}_1,e)\approx x^{(1)}_1, \quad
m(x^{(1)}_1,i(x^{(1)}_1))\approx e\,\},
\]
\[ E^\group_3=\{\,m(m(x^{(3)}_1,x^{(3)}_2),x^{(3)}_3)\approx
m(x^{(3)}_1,m(x^{(3)}_2,x^{(3)}_3))\,\}\]
and $E^\group_n=\emptyset$ for all $n\in\NN\setminus \{1,3\}$.
Clearly, groups are the same as models of
$\pres{\Sigma^\group}{E^\group}$.
Many other types of algebras---indeed all examples we have mentioned in the 
first paragraph of the introduction---can be written as models of $\pres{\Sigma}{E}$
for a suitable choice of the presentation of an equational theory
$\pres{\Sigma}{E}$ 
(see e.g.,~\cite{Burris_Sankappanavar}).

\medskip

We now describe the machinery of 
\emph{equational logic}, which enables us to investigate consequences of 
equational axioms without referring to their models.
We assume that the reader is familiar with the basics of 
mathematical logic, such as 
substitution of a term $t$ for a variable $x$ in a term $s$ 
(written as $s[x\mapsto t]$), simultaneous substitution
(written as $s[x_1\mapsto t_1, \dots, x_k\mapsto t_k]$), and 
the notion of proof (tree) and its definition by inference rules.

\begin{definition}
\label{def:eq_logic}
Let $\pres{\Sigma}{E}$ be a presentation of an equational theory.
\begin{enumerate}
\item Define the set of \defemph{$\pres{\Sigma}{E}$-proofs} inductively
by the following inference rules.
Every $\pres{\Sigma}{E}$-proof is a finite rooted tree whose vertices 
are labelled by $\Sigma$-equations.
\begin{center}
\AxiomC{}
\LeftLabel{(\sc{Ax})\ }
\RightLabel{\ (if $t\approx_n s \in E$)}
\UnaryInfC{$t\approx_n s$}
\DisplayProof
\medskip

\bottomAlignProof
\AxiomC{}
\LeftLabel{(\sc{Refl})\ }
\UnaryInfC{$t\approx_n t$}
\DisplayProof
\quad
\bottomAlignProof
\AxiomC{$t\approx_n s $}
\LeftLabel{(\sc{Sym})\ }
\UnaryInfC{$s\approx_n t$}
\DisplayProof
\quad
\bottomAlignProof
\AxiomC{$t\approx_n s$}
\AxiomC{$s\approx_n u$}
\LeftLabel{(\sc{Trans})\ }
\BinaryInfC{$t\approx_n u$}
\DisplayProof
\medskip

\bottomAlignProof
\AxiomC{$s\approx_k s'$}
\AxiomC{$t_1\approx_n t'_1$}
\AxiomC{$\cdots$}
\AxiomC{$t_k\approx_n t'_k$}
\LeftLabel{(\sc{Cong})\ }
\QuaternaryInfC{$s[x^{(k)}_1 \mapsto t_1, \dots, x^{(k)}_k\mapsto t_k]\approx_n 
s'[x^{(k)}_1 \mapsto t'_1, \dots, x^{(k)}_k\mapsto t'_k]$}
\DisplayProof
\end{center}
\item A $\Sigma$-equation 
$t\approx_n s\in T(\Sigma)\times T(\Sigma)$ is called an
\defemph{equational theorem of $\pres{\Sigma}{E}$} iff there exists 
a $\pres{\Sigma}{E}$-proof whose root is labelled by $t\approx_n s$.
We write \[
\pres{\Sigma}{E}\vdash t\approx_n s\]
to mean that $t\approx_n s$ is 
an equational theorem of $\pres{\Sigma}{E}$, and denote by 
$\overline{E}\subseteq T(\Sigma)\times T(\Sigma)$ the 
graded set of all equational theorems of $\pres{\Sigma}{E}$.
\qedhere
\end{enumerate}
\end{definition}

The assertion $\pres{\Sigma}{E}\vdash t\approx s$ says that the $\Sigma$-equation $t\approx s$ is a \emph{syntactic} consequence of the equational axioms ${E}$. 
Its counterpart is the \emph{semantic} consequence relation $\vDash$, defined as follows.

\begin{definition}
\label{def:semantical_consequence_rel}
\begin{enumerate}
\item Let $\Sigma$ be a graded set and 
$\str{A}$ be a $\Sigma$-algebra.
For any $\Sigma$-equation $t\approx_n s\in T(\Sigma)\times T(\Sigma)$, we write 
\[
\str{A}\vDash t\approx_n s
\]
to mean $\interp{t}^\str{A}=\interp{s}^\str{A}$.
\item Let $\pres{\Sigma}{E}$ be a presentation of an equational theory.
For any $\Sigma$-equation $t\approx_n s\in T(\Sigma)\times T(\Sigma)$, we write
\[
\pres{\Sigma}{E}\vDash t\approx_n s
\] 
to mean that for any $\pres{\Sigma}{E}$-model $\str{A}$,
$\str{A}\vDash t\approx_n s$.\qedhere
\end{enumerate}
\end{definition}

Equational logic is known to be both \emph{sound} and \emph{complete}, meaning that the two relations $\vdash$ and $\vDash$ coincide. 

\begin{thm}
\label{thm:eq_logic_sound_complete}
Let $\pres{\Sigma}{E}$ be a presentation of an equational theory.
\begin{enumerate}
\item (Soundness) Let $t\approx_n s\in T(\Sigma)\times T(\Sigma)$.
If $\pres{\Sigma}{E}\vdash t\approx_n s$ then $\pres{\Sigma}{E}\vDash 
t\approx_n s$.
\item (Completeness) Let $t\approx_n s\in T(\Sigma)\times T(\Sigma)$.
If $\pres{\Sigma}{E}\vDash t\approx_n s$ then $\pres{\Sigma}{E}\vdash 
t\approx_n s$.
\end{enumerate}
\end{thm}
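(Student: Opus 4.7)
My plan is to prove soundness by induction on the structure of proof trees, and completeness via the standard term-model construction applied to the quotient of $T(\Sigma)$ by the provable-equality relation $\overline{E}$.

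For soundness, I would induct on a $\pres{\Sigma}{E}$-proof of $t\approx_n s$. The rule (\textsc{Ax}) holds directly by Definition~\ref{defn:univ_alg_model}. The rules (\textsc{Refl}), (\textsc{Sym}), (\textsc{Trans}) are immediate because equality of functions $A^n\longrightarrow A$ is an equivalence relation. The only nontrivial case is (\textsc{Cong}), for which I would first establish a \emph{substitution lemma}: for any $\Sigma$-algebra $\str{A}$, any $s\in T(\Sigma)_k$, and any $t_1,\dots,t_k\in T(\Sigma)_n$,
\[
\interp{s[x^{(k)}_1\mapsto t_1,\dots,x^{(k)}_k\mapsto t_k]}^\str{A}
\;=\;
\interp{s}^\str{A}\circ\langle\interp{t_1}^\str{A},\dots,\interp{t_k}^\str{A}\rangle.
\]
This is proved by induction on $s$, using Definition~\ref{def:interp_Sigma_term}; the variable case uses that substituting into $x^{(k)}_i$ yields $t_i$ and that interpreting $x^{(k)}_i$ is the $i$-th projection. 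Granted this lemma, the (\textsc{Cong}) case is routine: the inductive hypotheses give $\interp{s}^\str{A}=\interp{s'}^\str{A}$ and $\interp{t_j}^\str{A}=\interp{t'_j}^\str{A}$ for all $j$, so the two composites above agree.

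For completeness, I fix $n\in\NN$ and form the candidate term model $\str{A}_n$ with underlying set $A_n:=T(\Sigma)_n/\overline{E}_n$. The rules (\textsc{Refl}), (\textsc{Sym}), (\textsc{Trans}) guarantee that $\overline{E}_n$ is an equivalence relation on $T(\Sigma)_n$, so the quotient set is well defined. For each $\sigma\in\Sigma_k$ I define
\[
\interp{\sigma}^{\str{A}_n}([u_1],\dots,[u_k]):=[\sigma(u_1,\dots,u_k)],
\]
whose well-definedness is exactly an instance of (\textsc{Cong}) combined with (\textsc{Refl}) applied to $\sigma\approx_k\sigma$ as the top premise. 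The central technical step is then to prove by induction on a $\Sigma$-term $t\in T(\Sigma)_m$ the identity
\[
\interp{t}^{\str{A}_n}([u_1],\dots,[u_m])=[\,t[x^{(m)}_1\mapsto u_1,\dots,x^{(m)}_m\mapsto u_m]\,]
\]
for any $u_1,\dots,u_m\in T(\Sigma)_n$; this is the term-model analogue of the substitution lemma above.

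Using this identity, I verify that $\str{A}_n$ is a $\pres{\Sigma}{E}$-model: for each axiom $t\approx_m s\in E$ and any $[u_1],\dots,[u_m]\in A_n$, both interpretations reduce to the classes of $t[\vec{x}\mapsto\vec{u}]$ and $s[\vec{x}\mapsto\vec{u}]$, which are equal in $A_n$ because (\textsc{Ax}) gives $t\approx_m s$ and then (\textsc{Cong}) with (\textsc{Refl}) on each $u_j$ yields $\pres{\Sigma}{E}\vdash t[\vec{x}\mapsto\vec{u}]\approx_n s[\vec{x}\mapsto\vec{u}]$. Finally, given $\pres{\Sigma}{E}\vDash t\approx_n s$ with $t,s\in T(\Sigma)_n$, applying this to the model $\str{A}_n$ and evaluating at the tuple $([x^{(n)}_1],\dots,[x^{(n)}_n])$ yields $[t]=[s]$ in $A_n$, i.e.\ $\pres{\Sigma}{E}\vdash t\approx_n s$. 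I expect the main obstacle to be stating and proving the two substitution lemmas cleanly while keeping the superscripts $x^{(k)}_i$ versus $x^{(n)}_i$ under control; the rest is a bookkeeping exercise once those lemmas are in place.
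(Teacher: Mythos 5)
Your proposal is correct and follows essentially the same route as the paper: soundness by induction on proofs (with the substitution lemma handling (\textsc{Cong})), and completeness via the term model $T(\Sigma)_n/\overline{E}_n$, with well-definedness of $\interp{\sigma}$ from (\textsc{Cong}), verification of the axioms from (\textsc{Ax}), (\textsc{Refl}) and (\textsc{Cong}), and evaluation at $([x^{(n)}_1],\dots,[x^{(n)}_n])$ to conclude $[t]=[s]$. The only difference is that you spell out the substitution lemmas that the paper leaves as observations.
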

\begin{proof}
The soundness theorem can be shown by a straightforward induction over
$\pres{\Sigma}{E}$-proofs.

To prove the completeness theorem, first observe that 
the graded set $\overline{E}\subseteq T(\Sigma)\times T(\Sigma)$ of all equational theorems of $\pres{\Sigma}{E}$ (Definition~\ref{def:eq_logic}) is an equivalece relation on $T(\Sigma)$, thanks to the rules ({\sc Refl}), ({\sc Sym}) and ({\sc Trans}).
Hence we can consider the quotient graded set $T(\Sigma)/\overline{E}$.
We claim that for each $n\in\NN$, the set $T^{\pres{\Sigma}{E}}_n=(T(\Sigma)/\overline{E})_n$ has a natural structure of $\pres{\Sigma}{E}$-model.

We start with endowing a $\Sigma$-algebra structure on the set $T^\pres{\Sigma}{E}_n$; that is, we define for each $k\in\NN$ and each $\sigma\in\Sigma_k$, its interpretation $\interp{\sigma}\colon (T^\pres{\Sigma}{E}_n)^k\longrightarrow T^\pres{\Sigma}{E}_n$.
This is defined as 
\begin{equation*} 
	\interp{\sigma}([t_1]_{\overline{E}},\dots,[t_k]_{\overline{E}})=[\sigma(t_1,\dots,t_k)]_{\overline{E}}
\end{equation*}
for each $t_1,\dots, t_k\in T(\Sigma)_n$.
To see that it is indeed well-defined, consider the instances of the ({\sc{Cong}}) rule where $s=s'=\sigma(x^{(k)}_1,\dots,x^{(k)}_k)$. 
Observe that in this $\Sigma$-algebra, the interpretation of a $\Sigma$-term $s\in T(\Sigma)_k$ is given by 
\[
\interp{s}([t_1]_{\overline{E}},\dots,[t_k]_{\overline{E}})
=[s[x^{(k)}_1\mapsto t_1,\dots, x^{(k)}_k\mapsto t_k] ]_{\overline{E}}.
\]

The $\Sigma$-algebra $\str{T}^\pres{\Sigma}{E}_n=(T^\pres{\Sigma}{E}_n,\interp{-})$ satisfies all equational axioms of $\pres{\Sigma}{E}$. To see this, notice that if $s\approx_k s'\in E$, then for each $t_1,\dots,t_k\in T(\Sigma)_n$, the $\Sigma$-equation $s[x^{(k)}_1\mapsto t_1,\dots, x^{(k)}_k\mapsto t_k] \approx_n s'[x^{(k)}_1\mapsto t_1,\dots, x^{(k)}_k\mapsto t_k]$ is an equational theorem of $\pres{\Sigma}{E}$, by the rules ({\sc Ax}), ({\sc Refl}) and ({\sc Cong}). Hence $\interp{s}=\interp{s'}$ holds in $\str{T}^\pres{\Sigma}{E}_n$.

Now suppose that for a $\Sigma$-equation $t\approx_n s$ we have $\pres{\Sigma}{E}\vDash t\approx_n s$. Then in particular $\str{T}^\pres{\Sigma}{E}_n\vDash t\approx_n s$, and in particular the images of $[x^{(n)}_1]_{\overline{E}},\dots, [x^{(n)}_n]_{\overline{E}}\in T^\pres{\Sigma}{E}_n$ under the functions $\interp{t}$ and $\interp{s}$ agree.
Hence we have
\begin{align*}
[t]_{\overline{E}} &= [t[x^{(n)}_1\mapsto x^{(n)}_1, \dots, x^{(n)}_n\mapsto x^{(n)}_n]]_{\overline{E}}\\
&= \interp{t}([x^{(n)}_1]_{\overline{E}},\dots, [x^{(n)}_n]_{\overline{E}})\\
&= \interp{s}([x^{(n)}_1]_{\overline{E}},\dots, [x^{(n)}_n]_{\overline{E}})\\
&= [s[x^{(n)}_1\mapsto x^{(n)}_1, \dots, x^{(n)}_n\mapsto x^{(n)}_n]]_{\overline{E}}\\
&=[s]_{\overline{E}},
\end{align*}
namely
$\pres{\Sigma}{E}\vdash t\approx_n s$.
\end{proof}

\medskip

Before closing this section, 
we remark that the $\pres{\Sigma}{E}$-model $\str{T}^\pres{\Sigma}{E}_n$ used in the above proof
is in fact the \emph{free $\pres{\Sigma}{E}$-model generated by the $n$-element set  $X_n=\{x^{(n)}_1,\dots,x^{(n)}_n\}$}, in the following sense.
\begin{proposition}
\label{prop:free_model}
Let $\pres{\Sigma}{E}$ be a presentation of an equational theory and $n$ be a natural number. Define the function $\eta_{X_n}\colon X_n\longrightarrow T^\pres{\Sigma}{E}_n$ by 
$\eta_{X_n}(x^{(n)}_i)=[x^{(n)}_i]_{\overline{E}}$ for each $i\in\{1,\dots,n\}$.
Given any $\pres{\Sigma}{E}$-model $\str{A}=(A,\interp{-})$ and any function $f\colon X_n\longrightarrow A$, there exists a 
unique homomorphism of $\pres{\Sigma}{E}$-models $g\colon \str{T}^\pres{\Sigma}{E}_n\longrightarrow \str{A}$
such that $g\circ \eta_{X_n}=f$.
\[
\begin{tikzpicture}[baseline=-\the\dimexpr\fontdimen22\textfont2\relax ]
      \node (TL) at (0,1)  {$X_n$};
      \node (TR) at (3,1)  {$T^\pres{\Sigma}{E}_n$};
      \node (BR) at (3,-1) {$A$};
      \node (B) at (1.5,-1.8) {{(sets)}};
      \draw[->] (TL) to node[auto,labelsize](T) {$\eta_{X_n}$} (TR);
      \draw[->]  (TR) to node[auto,labelsize] {$g$} (BR);
      \draw[->]  (TL) to node[auto,swap,labelsize] {$f$} (BR);
\end{tikzpicture} 
\qquad
\begin{tikzpicture}[baseline=-\the\dimexpr\fontdimen22\textfont2\relax ]
      \node (TR) at (3,1)  {$\str{T}^\pres{\Sigma}{E}_n$};
      \node (BR) at (3,-1) {$\str{A}$};
      \node (B) at (3,-1.8) {($\pres{\Sigma}{E}$-models)};
      \draw[->,dashed]  (TR) to node[auto,labelsize] {$g$} (BR);
\end{tikzpicture} 
\]
\end{proposition}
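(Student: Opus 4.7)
The plan is to define the homomorphism $g$ explicitly by the formula
\[
g([t]_{\overline{E}}) = \interp{t}^{\str{A}}(f(x^{(n)}_1),\dots,f(x^{(n)}_n))
\]
for each $\Sigma$-term $t\in T(\Sigma)_n$, and then verify (i) well-definedness, (ii) the triangle identity, (iii) the homomorphism condition, and (iv) uniqueness.

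For well-definedness, I would argue as follows. If $t,s\in T(\Sigma)_n$ satisfy $[t]_{\overline{E}} = [s]_{\overline{E}}$, then by definition $\pres{\Sigma}{E}\vdash t\approx_n s$. Since $\str{A}$ is a $\pres{\Sigma}{E}$-model, the soundness half of Theorem~\ref{thm:eq_logic_sound_complete} gives $\str{A}\vDash t\approx_n s$, hence $\interp{t}^{\str{A}} = \interp{s}^{\str{A}}$ as functions $A^n\longrightarrow A$, and so the two candidate values of $g$ coincide. For the triangle identity, $g(\eta_{X_n}(x^{(n)}_i)) = \interp{x^{(n)}_i}^{\str{A}}(f(x^{(n)}_1),\dots,f(x^{(n)}_n)) = f(x^{(n)}_i)$ by Definition~\ref{def:interp_Sigma_term}(1).

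For the homomorphism condition, fix $k\in\NN$, $\sigma\in\Sigma_k$ and $t_1,\dots,t_k\in T(\Sigma)_n$. By the definition of the $\Sigma$-algebra structure on $\str{T}^\pres{\Sigma}{E}_n$ recorded in the proof of Theorem~\ref{thm:eq_logic_sound_complete}, and then by Definition~\ref{def:interp_Sigma_term}(2) applied to $\str{A}$, we get
\[
g(\interp{\sigma}([t_1]_{\overline{E}},\dots,[t_k]_{\overline{E}}))
= \interp{\sigma(t_1,\dots,t_k)}^{\str{A}}(f(x^{(n)}_1),\dots,f(x^{(n)}_n))
= \interp{\sigma}^{\str{A}}(g([t_1]_{\overline{E}}),\dots,g([t_k]_{\overline{E}})),
\]
as required.

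For uniqueness, suppose $g'\colon \str{T}^\pres{\Sigma}{E}_n \longrightarrow \str{A}$ is any $\Sigma$-homomorphism with $g'\circ \eta_{X_n} = f$. I would show by induction on the structure of $t\in T(\Sigma)_n$ that $g'([t]_{\overline{E}}) = \interp{t}^{\str{A}}(f(x^{(n)}_1),\dots,f(x^{(n)}_n))$; the base case $t=x^{(n)}_i$ is the triangle identity for $g'$, and the inductive step $t=\sigma(t_1,\dots,t_k)$ uses that $g'$ preserves $\interp{\sigma}$ together with the induction hypothesis, together with the formula for $\interp{s}$ on $\str{T}^\pres{\Sigma}{E}_n$ noted in the proof of Theorem~\ref{thm:eq_logic_sound_complete}. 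This forces $g' = g$.

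The only genuinely substantive step is well-definedness, and its proof consists entirely of an appeal to soundness; everything else is bookkeeping that unfolds the definitions of the interpretation function and of the quotient $\Sigma$-algebra $\str{T}^\pres{\Sigma}{E}_n$.
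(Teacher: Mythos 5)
Your proof is correct and follows the same route the paper intends: the paper's own proof just says ``$g$ can be defined from $f$ by recursion; the details are omitted,'' and your explicit formula $g([t]_{\overline{E}}) = \interp{t}^{\str{A}}(f(x^{(n)}_1),\dots,f(x^{(n)}_n))$ is exactly what that recursion computes. You also correctly identify and discharge the one step the paper leaves silent, namely well-definedness on $\overline{E}$-classes via the soundness half of Theorem~\ref{thm:eq_logic_sound_complete}, which is legitimate since soundness is established independently beforehand.
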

\begin{proof}
The required homomorphism $g$ can be defined from $f$ by recursion; the details are omitted.
\end{proof}

\section{Clones}
\label{sec:clone}
The central notion we have introduced in the previous
section is that of 
\emph{presentation of an equational theory} 
(Definition~\ref{defn:univ_alg_pres_eq_thy}),
whose main purpose is to define its 
\emph{models} (Definition~\ref{defn:univ_alg_model}).
It can happen, however, that
two different presentations of equational theories 
define the ``same'' models, sometimes 
in a quite superficial manner.

For example, consider the following presentation of an equational theory
$\pres{\Sigma^{\group'}}{E^{\group'}}$:
\[
\Sigma^{\group'}=\Sigma^\group,
\]
\begin{multline*}
E^{\group'}_1=\{\,m(x^{(1)}_1,e)\approx x^{(1)}_1,\quad
m(e,x^{(1)}_1)\approx x^{(1)}_1, \\
m(x^{(1)}_1,i(x^{(1)}_1))\approx e,\quad
m(i(x^{(1)}_1),x^{(1)}_1)\approx e\,\},
\end{multline*}
\[
E^{\group'}_n=E^{\group}_n\quad\text{ for all }n\in\NN\setminus\{1\}.
\]
It is a classical fact that a group can be defined either 
as a model of $\pres{\Sigma^\group}{E^\group}$ or
as a model of $\pres{\Sigma^{\group'}}{E^{\group'}}$.
Indeed, we may add arbitrary {equational theorems} of 
$\pres{\Sigma^\group}{E^\group}$, such as
$i(i(x_1))\approx x_1$, $i(m(x_1,x_2))\approx m(i(x_2),i(x_1))$
and $x_1\approx x_1$,
as additional equational axioms and still obtain the groups as the models.

As another example, let us consider the presentation of an equational 
theory $\pres{\Sigma^{\group''}}{E^{\group''}}$ defined as:
\[
\Sigma^{\group''}_0=\{e,e'\}, \quad
\Sigma^{\group''}_n=\Sigma^\group_n \quad\text{ for all }n\in\NN\setminus\{0\},
\]
\[
E^{\group''}_0=\{e\approx e'\}, \quad E^{\group''}_n=E^\group_n\quad
\text{ for all }n\in\NN\setminus\{0\}.
\]
To make a set $A$ into a model of $\pres{\Sigma^{\group''}}{E^{\group''}}$,
formally we have to specify two elements $\interp{e}$ and $\interp{e'}$
of $A$, albeit they are forced to be equal and play the role of unit
with respect to the group structure determined by $\interp{m}$.
We cannot quite say that models of $\pres{\Sigma^{\group''}}{E^{\group''}}$
are \emph{equal} to models of $\pres{\Sigma^\group}{E^\group}$,
since their data differ;
however, it should be intuitively clear that there is no point
in distinguishing them.\footnote{In precise mathematical terms, our claim of the ``sameness''
amounts to the existence of an isomorphism of categories between 
the categories of $\pres{\Sigma^\group}{E^\group}$-models
and of $\pres{\Sigma^{\group''}}{E^{\group''}}$-models
preserving the underlying sets of models, i.e., 
commuting with the forgetful functors into the category $\Set$ of sets.}

\medskip

A presentation of an equational theory has much freedom in choices both of 
basic operations and of equational axioms.
It is really a \emph{presentation}.
In fact, there is a notion which may be thought of as
an \emph{equational theory} itself,
something that a presentation of an equational theory presents;
it is called an \emph{(abstract) clone} (see e.g., \cite{Taylor_clone}).

\begin{definition}
\label{def:clone}
A \defemph{clone} $\monoid{T}$ consists of:
\begin{description}
\item[(CD1)] a graded set $T=(T_n)_{n\in\NN}$;\footnote{In traditional universal algebra, people often omit $T_0$.}
\item[(CD2)] for each $n\in\NN$ and $i\in\{1,\dots,n\}$, an element
\[
p^{(n)}_i\in T_n;
\]
\item[(CD3)] for each $k,n\in\NN$, a function
\[
\circ^{(n)}_k\colon T_k\times (T_n)^k\longrightarrow T_n
\]
whose action on an element $(\phi,\theta_1,\dots,\theta_k)\in T_k\times (T_n)^k$
we write as $\phi\circ^{(n)}_k(\theta_1,\dots,\theta_k)$
or simply as $\phi\circ(\theta_1,\dots,\theta_k)$;
\end{description}
satisfying the following equations:
\begin{description}
\item[(CA1)] for each $k,n\in\NN$, $j\in\{1,\dots,k\}$ and 
$\theta_1,\dots,\theta_k\in T_n$, 
\[
p^{(k)}_j\circ^{(n)}_k(\theta_1,\dots,\theta_k) = \theta_j;
\]
\item[(CA2)] for each $n\in\NN$, $\theta\in T_n$,
\[
\theta\circ^{(n)}_n (p^{(n)}_1,\dots,p^{(n)}_n) = \theta;
\]
\item[(CA3)] for each $l,k,n\in\NN$, $\psi\in T_l$, $\phi_1,\dots,\phi_l\in
T_k$, $\theta_1,\dots,\theta_k\in T_n$, 
\begin{multline*}
\psi\circ^{(k)}_l\big(\phi_1\circ^{(n)}_k(\theta_1,\dots,\theta_k),\ 
\dots,\ \phi_l\circ^{(n)}_k(\theta_1,\dots,\theta_k)\big)
\\=
\big(\psi\circ^{(k)}_l(\phi_1,\dots,\phi_l)\big)\circ^{(n)}_k 
(\theta_1,\dots,\theta_k).
\end{multline*}
\end{description}
Such a clone is written as 
$\monoid{T}=(T,(p^{(i)}_n)_{n\in\NN,i\in\{1,\dots,n\}},
(\circ^{(n)}_k)_{k,n\in\NN})$
or simply $(T,p,\circ)$.
\end{definition}

To understand the definition of clone,
it is helpful to look at some pictures known as \emph{string diagrams}
(cf.~\cite{Curien_operad,Leinster_book}).
Given a clone $\monoid{T}=(T,p,\circ)$,
let us draw an element $\theta$ of $T_n$ as a triangle with $n$ ``{input wires}'' and a single ``{output wire}'':
\begin{equation}\label{eqn:triangle_theta}
\begin{tikzpicture}[baseline=-\the\dimexpr\fontdimen22\textfont2\relax ]
      \coordinate (TL) at (0,1);
      \coordinate (BL) at (0,-1);
      \coordinate (R) at (1.5,0);
      \path[draw,string] (TL)--(BL)--(R)--cycle;
      \node at (0.5,0) {$\theta$};
      \path[draw,string] (TL)+(0,-0.3)-- +(-0.6,-0.3);
      \node at (-0.3,0) {$\rvdots$};
      \path[draw,string] (BL)+(0,0.3)-- +(-0.6,0.3);
      \draw [decorate,decoration={brace,amplitude=5pt,mirror}]
      (TL)++(-0.8,-0.1) -- +(0,-1.8) node [midway,xshift=-0.5cm,labelsize] 
      {$n$};
      \path[draw,string] (R)-- +(0.6,0);
\end{tikzpicture}
\end{equation}
The element $p^{(n)}_i$ in (CD2) may also be denoted by
\begin{equation*}
\label{eqn:clone_p}
\begin{tikzpicture}[baseline=-\the\dimexpr\fontdimen22\textfont2\relax ]
      \coordinate (t) at (0,1);
      \coordinate (s) at (0,0.4);
      \coordinate (t3) at (0,0);
      \coordinate (t4) at (0,-0.4);
      \coordinate (t5) at (0,-1);
      \node [circle,draw,inner sep=0.15em,fill] at (0.7,-1) {};
      \node [circle,draw,inner sep=0.15em,fill] at (0.7,-0.4) {};  
      \node [circle,draw,inner sep=0.15em,fill] at (0.7,0.4) {};
      \node [circle,draw,inner sep=0.15em,fill] at (0.7,1) {};
      \draw [string] (0,1) -- (0.7,1);
      \node at (0.3,0.7) {$\rvdots$};
      \draw [string] (0,0.4) -- (0.7,0.4);
      \draw [string] (0,0) -- (2,0);
      \draw [string] (0,-1) -- (0.7,-1);
      \node at (0.3,-0.7) {$\rvdots$};
      \draw [string] (0,-0.4) -- (0.7,-0.4);
      \node [labelsize, left of=t3, node distance=0.42cm] {$(i\text{-th})$};
      \draw [decorate,decoration={brace,amplitude=5pt,mirror}]
      (-0.8,1.1) -- +(0,-2.2) node [midway,xshift=-0.5cm,labelsize] 
      {$n$};
\end{tikzpicture}
\end{equation*}
and $\phi\circ^{(n)}_k(\theta_1,\dots,\theta_k)$ in (CD3) by
\begin{equation*}
\label{eqn:clone_composition}
\begin{tikzpicture} [baseline={([yshift=-0.5ex]current bounding box.center)}]
      \coordinate (TL) at (0,1.5);
      \coordinate (BL) at (0,-1.5);
      \coordinate (R) at (1.5,0);
      \path[draw,string] (TL)--(BL)--(R)--cycle;
      \node at (0.5,0) {$\phi$};
      \path[draw,string] (TL)+(0,-0.3)-- +(-0.6,-0.3);
      \node at (-0.3,0) {$\rvdots$};
      \path[draw,string] (BL)+(0,0.3)-- +(-0.6,0.3);
      \path[draw,string] (R)-- +(0.6,0);
  \begin{scope}[shift={(-2.1,1.2)}]
      \coordinate (TL) at (0,0.8);
      \coordinate (BL) at (0,-0.8);
      \coordinate (R) at (1.5,0);
      \path[draw,string] (TL)--(BL)--(R)--cycle;
      \node at (0.5,0) {$\theta_1$};
      \node at (0.5,-1.2) {$\rvdots$};
      \node at (-0.3,0) {$\rvdots$};
  \end{scope}
  \begin{scope}[shift={(-2.1,-1.2)}]
      \coordinate (TL) at (0,0.8);
      \coordinate (BL) at (0,-0.8);
      \coordinate (R) at (1.5,0);
      \path[draw,string] (TL)--(BL)--(R)--cycle;
      \node at (0.5,0) {$\theta_{k}$};
      \node at (-0.3,0) {$\rvdots$};
  \end{scope}
  \begin{scope}[shift={(-2.1,0)}]
      \node [circle,draw,inner sep=0.15em,fill] at (-1.3,0.5) {};
      \node [circle,draw,inner sep=0.15em,fill] at (-1.3,-0.5) {};
      \draw [string] (-2,0.5) -- (-1.4,0.5) ..controls (-1,0.5) and  (-0.9,1.7) 
      .. (-0.6,1.7)--(0,1.7);
      \draw [string] (-2,0.5) -- (-1.4,0.5) ..controls (-1,0.5) and  
      (-0.9,-0.7) .. (-0.6,-0.7)--(0,-0.7);
      \draw [string] (-2,-0.5) -- (-1.4,-0.5) ..controls (-1,-0.5) and  
      (-0.9,0.7) .. (-0.6,0.7)--(0,0.7);
      \draw [string] (-2,-0.5) -- (-1.4,-0.5) ..controls (-1,-0.5) and  
      (-0.9,-1.7) .. (-0.6,-1.7)--(0,-1.7);
      \node at (-1.7,0) {$\rvdots$};
      \draw [decorate,decoration={brace,amplitude=5pt}]
      (-2.2,-0.7) -- (-2.2,0.7) node [midway,xshift=-0.5cm,labelsize] 
      {$n$};
  \end{scope}
\end{tikzpicture}\ .
\end{equation*}
Then the axioms (CA1)--(CA3) simply assert obvious equations 
between the resulting ``circuits''.
For instance, (CA2) for $n=3$ reads:
\begin{equation*}
\begin{tikzpicture}[baseline=-\the\dimexpr\fontdimen22\textfont2\relax ]
      \draw [string] (0,0.3) -- (0.6,0.3) ..controls (0.9,0.3) and  (1.2,1.5) 
      .. (1.5,1.5)--(3.1,1.5)..controls (3.4,1.5) and  (3.7,0.3) .. 
      (4,0.3)--(4.6,0.3);
      \draw [string] (0,0) -- (0.6,0) ..controls (0.9,0) and  (1.2,1.2) .. 
      (1.5,1.2)--(2,1.2);
      \draw [string] (0,-0.3) -- (0.6,-0.3) ..controls (0.9,-0.3) and  
      (1.2,0.9) .. (1.5,0.9)--(2,0.9);
      \draw [string] (0,0.3) -- (2,0.3);
      \draw [string] (0,0) -- (4.6,0);
      \draw [string] (0,-0.3) -- (2,-0.3);
      \draw [string] (0,-0.3) -- (0.6,-0.3) ..controls (0.9,-0.3) and  
      (1.2,-1.5) .. (1.5,-1.5)--(3.1,-1.5)..controls (3.4,-1.5) and  (3.7,-0.3) 
      .. (4,-0.3)--(4.6,-0.3);
      \draw [string] (0,0) -- (0.6,0) ..controls (0.9,0) and  (1.2,-1.2) .. 
      (1.5,-1.2)--(2,-1.2);
      \draw [string] (0,0.3) -- (0.6,0.3) ..controls (0.9,0.3) and  (1.2,-0.9) 
      .. (1.5,-0.9)--(2,-0.9);
      \node [circle,draw,inner sep=0.15em,fill] at (2,1.2) {};
      \node [circle,draw,inner sep=0.15em,fill] at (2,0.9) {};  
      \node [circle,draw,inner sep=0.15em,fill] at (2,0.3) {};
      \node [circle,draw,inner sep=0.15em,fill] at (2,-0.3) {};  
      \node [circle,draw,inner sep=0.15em,fill] at (2,-0.9) {};
      \node [circle,draw,inner sep=0.15em,fill] at (2,-1.2) {}; 
      \node [circle,draw,inner sep=0.15em,fill] at (0.7,0.3) {};
      \node [circle,draw,inner sep=0.15em,fill] at (0.7,0) {};
      \node [circle,draw,inner sep=0.15em,fill] at (0.7,-0.3) {};
      \path[draw,string] (4.6,0.7)--(4.6,-0.7)--(6.1,0)--cycle;
      \node at (5.1,0) {$\theta$};
      \draw [string] (6.1,0) -- (6.7,0);
\end{tikzpicture}
\ =\ 
\begin{tikzpicture}[baseline=-\the\dimexpr\fontdimen22\textfont2\relax ]
      \draw [string] (0,0.3) -- (0.6,0.3);
      \draw [string] (0,0) -- (0.6,0);
      \draw [string] (0,-0.3) -- (0.6,-0.3);

      \path[draw,string] (0.6,0.7)--(0.6,-0.7)--(2.1,0)--cycle;
      \node at (1.1,0) {$\theta$};
      \draw [string] (2.1,0) -- (2.7,0);
\end{tikzpicture}\ .
\end{equation*}

Next we define {models} of a clone.
We first need a few preliminary definitions.

\begin{definition}
\label{def:endo_clone}
Let $A$ be a set.
Define the clone $\Endcl{A}=(\enrich{A}{A},p,\circ)$
as follows:
\begin{description}
\item[(CD1)] for each $n\in\NN$, let $\enrich{A}{A}_n$ be the set of all
functions from $A^n$ to $A$; 
\item[(CD2)] for each $n\in\NN$ and $i\in\{1,\dots,n\}$,
let $p^{(n)}_i$ be the $i$-th projection 
$A^n\longrightarrow A$, $(a_1,\dots,a_n)\longmapsto a_i$;
\item[(CD3)] for each $k,n\in\NN$, $g\colon A^k\longrightarrow A$ and
$f_1,\dots,f_k\colon A^n\longrightarrow A$, 
let $g\circ^{(n)}_k(f_1,\dots,f_k)$ be the function
$(a_1,\dots,a_n)\longmapsto g(f_1(a_1,\dots,a_n),\dots,f_k(a_1,\dots,a_n))$,
that is, the following composite:
\[
\begin{tikzpicture}[baseline=-\the\dimexpr\fontdimen22\textfont2\relax ]
      \node (1) at (0,0)  {$A^n$};
      \node (2) at (3,0)  {$A^k$};
      \node (3) at (5,0) {$A.$};
      \draw[->] (1) to node[auto,labelsize]{$\langle f_1,\dots,f_k\rangle$} (2);
      \draw[->] (2) to node[auto,labelsize] {$g$} (3);
\end{tikzpicture} 
\]
\end{description}
It is straightforward to check the axioms (CA1)--(CA3).
\end{definition}

\begin{definition}
\label{def:clone_hom}
Let $\monoid{T}=(T,p,\circ)$ and $\monoid{T'}=(T',p',\circ')$ be 
clones.
A \defemph{clone homomorphism from $\monoid{T}$ to $\monoid{T'}$}
is a morphism of graded sets (Definition~\ref{def:graded_set}) 
$h\colon T\longrightarrow T'$ which preserves the structure of clones;
precisely,
\begin{itemize}
\item for each $n\in\NN$ and $i\in\{1,\dots,n\}$, $h_n(p^{(n)}_i)=p'^{(n)}_i$;
\item for each $k,n\in\NN$, $\phi\in T_k$ and $\theta_1,\dots,\theta_k\in T_n$,
\[
h_n\big(\phi\circ^{(n)}_k(\theta_1,\dots,\theta_k)\big)=
h_k(\phi)\circ'^{(n)}_k \big(h_n(\theta_1),\dots,h_n(\theta_k)\big).\qedhere
\]
\end{itemize}
\end{definition}

\begin{definition}
\label{def:clone_model}
Let $\monoid{T}$ be a clone.
A \defemph{model of $\monoid{T}$} is a pair $\str{A}=(A,\alpha)$ consisting of a set $A$ and 
a clone homomorphism $\alpha\colon \monoid{T}\longrightarrow \Endcl{A}$.
\end{definition}

Let us then define the notion of homomorphism between models.
First we extend the definition of the graded set $\enrich{A}{A}$ 
introduced in Definition~\ref{def:endo_clone}.

\begin{definition}
\begin{enumerate}
\item Let $A$ and $B$ be sets. The graded set $\enrich{A}{B}$ is defined by 
setting,
for each $n\in\NN$, $\enrich{A}{B}_n$ be the set of all functions from $A^n$
to $B$.
\item Let $A,A'$ and $B$ be sets and $f\colon A'\longrightarrow A$ be a 
function.
The morphism of graded sets 
$\enrich{f}{B}\colon\enrich{A}{B}\longrightarrow\enrich{A'}{B}$
is defined by setting, for each $n\in\NN$, 
$\enrich{f}{B}_n\colon\enrich{A}{B}_n\longrightarrow\enrich{A'}{B}_n$ be the 
precomposition by $f^n\colon (A')^n\longrightarrow A^n$;
that is, $h\longmapsto h\circ f^n$. 
\item Let $A,B$ and $B'$ be sets and $g\colon B\longrightarrow B'$ be a 
function.
The morphism of graded sets 
$\enrich{A}{g}\colon\enrich{A}{B}\longrightarrow\enrich{A}{B'}$
is defined by setting, for each $n\in\NN$, 
$\enrich{A}{g}_n\colon\enrich{A}{B}_n\longrightarrow\enrich{A}{B'}_n$ be the 
postcomposition by $g\colon B\longrightarrow B'$;
that is, $h\longmapsto g\circ h$.\qedhere
\end{enumerate}
\end{definition}

\begin{definition}
\label{def:clone_mod_hom}
Let $\monoid{T}$ be a clone, and $\str{A}=(A,\alpha)$ and $\str{B}=(B,\beta)$ be models of 
$\monoid{T}$.
A \defemph{homomorphism from $\str{A}$ to $\str{B}$} is a function 
$f\colon A\longrightarrow B$ making the following diagram of 
morphisms of graded sets commute:
\[
\begin{tikzpicture}[baseline=-\the\dimexpr\fontdimen22\textfont2\relax ]
      \node (TL) at (0,2)  {$T$};
      \node (TR) at (3,2)  {$\enrich{A}{A}$};
      \node (BL) at (0,0) {$\enrich{B}{B}$};
      \node (BR) at (3,0) {$\enrich{A}{B}.$};
      \draw[->] (TL) to node[auto,labelsize](T) {$\alpha$} (TR);
      \draw[->]  (TR) to node[auto,labelsize] {$\enrich{A}{f}$} (BR);
      \draw[->]  (TL) to node[auto,swap,labelsize] {$\beta$} (BL);
      \draw[->] (BL) to node[auto,swap,labelsize](B) {$\enrich{f}{B}$} (BR);
\end{tikzpicture} \qedhere
\]
\end{definition}

\medskip

Now let us turn to the relation between
presentations of equational theories 
(Definition~\ref{defn:univ_alg_pres_eq_thy})
and clones.
We start with the observation that the graded set $T(\Sigma)$ of $\Sigma$-terms
(Definition~\ref{def:Sigma_term})
has a canonical clone structure, given as follows:
\begin{description}
\item[(CD2)] for each $n\in\NN$ and $i\in\{1,\dots,n\}$, let $p^{(n)}_i$ be 
$x^{(n)}_i\in T(\Sigma)_n$;
\item[(CD3)] for each $k,n\in\NN$, $s\in T(\Sigma)_k$ and $t_1,\dots,t_k\in 
T(\Sigma)_n$, let $s\circ^{(n)}_k(t_1,\dots,t_k)$ be
$s[x^{(k)}_1\mapsto t_1,\dots,x^{(k)}_k\mapsto t_k]\in T(\Sigma)_n$.
\end{description}
We denote the resulting clone by $\monoid{T}(\Sigma)$.
In fact, this clone is characterised as the \emph{free clone generated by $\Sigma$}, in the following sense.

\begin{proposition}
\label{prop:free_clone}
Let $\Sigma$ be a graded set, and 
let $\eta_\Sigma\colon \Sigma\longrightarrow
T(\Sigma)$ be the morphism of graded sets defined by 
$(\eta_\Sigma)_n(\sigma)=\sigma(x^{(n)}_1,\dots,x^{(n)}_n)$ for each $n\in\NN$ 
and $\sigma\in\Sigma_n$.
Given any clone $\monoid{S}=(S,p,\circ)$ and any morphism of 
graded sets $f\colon \Sigma\longrightarrow S$, there exists a 
unique clone homomorphism $g\colon \monoid{T}(\Sigma)\longrightarrow \monoid{S}$
such that $g\circ \eta_\Sigma=f$.
\[
\begin{tikzpicture}[baseline=-\the\dimexpr\fontdimen22\textfont2\relax ]
      \node (TL) at (0,1)  {$\Sigma$};
      \node (TR) at (3,1)  {${T}(\Sigma)$};
      \node (BR) at (3,-1) {$S$};
      \node (B) at (1.5,-1.8) {{(graded sets)}};
      \draw[->] (TL) to node[auto,labelsize](T) {$\eta_\Sigma$} (TR);
      \draw[->]  (TR) to node[auto,labelsize] {$g$} (BR);
      \draw[->]  (TL) to node[auto,swap,labelsize] {$f$} (BR);
\end{tikzpicture} 
\qquad
\begin{tikzpicture}[baseline=-\the\dimexpr\fontdimen22\textfont2\relax ]
      \node (TR) at (3,1)  {$\monoid{T}(\Sigma)$};
      \node (BR) at (3,-1) {$\monoid{S}$};
      \node (B) at (3,-1.8) {(clones)};
      \draw[->,dashed]  (TR) to node[auto,labelsize] {$g$} (BR);
\end{tikzpicture} 
\]
\end{proposition}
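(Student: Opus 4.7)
The plan is to define the clone homomorphism $g\colon \monoid{T}(\Sigma) \longrightarrow \monoid{S}$ by recursion on the inductive structure of $\Sigma$-terms, then verify separately that it is indeed a clone homomorphism, that it satisfies $g\circ\eta_\Sigma = f$, and that it is the only such homomorphism.

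For the construction, I would set $g_n(x^{(n)}_i) = p^{(n)}_i$ (the projection in $\monoid{S}$) and, for a composite term with $\sigma\in\Sigma_k$ and $t_1,\dots,t_k\in T(\Sigma)_n$, define
\[
g_n\bigl(\sigma(t_1,\dots,t_k)\bigr) \;=\; f_k(\sigma)\circ^{(n)}_k\bigl(g_n(t_1),\dots,g_n(t_k)\bigr).
\]
The equation $g\circ\eta_\Sigma = f$ is then immediate: for $\sigma\in\Sigma_n$, we compute $g_n(\sigma(x^{(n)}_1,\dots,x^{(n)}_n)) = f_n(\sigma)\circ^{(n)}_n(p^{(n)}_1,\dots,p^{(n)}_n)$, which equals $f_n(\sigma)$ by axiom (CA2) in $\monoid{S}$. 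Preservation of the projection data in the sense of Definition~\ref{def:clone_hom} is built into the first defining clause of $g$.

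The main technical step, and the place where care is needed, is verifying that $g$ preserves composition. I would prove the equation
\[
g_n\bigl(s\circ^{(n)}_k(t_1,\dots,t_k)\bigr) \;=\; g_k(s)\circ^{(n)}_k\bigl(g_n(t_1),\dots,g_n(t_k)\bigr)
\]
by induction on the structure of $s\in T(\Sigma)_k$. For $s = x^{(k)}_j$, the left-hand side unfolds to $g_n(t_j)$ (since composition in $\monoid{T}(\Sigma)$ is substitution), while the right-hand side becomes $p^{(k)}_j \circ^{(n)}_k(g_n(t_1),\dots,g_n(t_k))$, which equals $g_n(t_j)$ by axiom (CA1) in $\monoid{S}$. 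For $s = \sigma(s_1,\dots,s_l)$, substitution distributes over the outer symbol, the inductive hypothesis applies to each $s_i$, and then the associativity axiom (CA3) in $\monoid{S}$ rearranges the result into the required form. This inductive argument is the heart of the proof; everything else is essentially bookkeeping.

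Finally, for uniqueness, suppose $g'\colon \monoid{T}(\Sigma)\longrightarrow \monoid{S}$ is another clone homomorphism with $g'\circ\eta_\Sigma = f$. Preservation of projections forces $g'_n(x^{(n)}_i) = p^{(n)}_i$. For a term $\sigma(t_1,\dots,t_k)$ with $\sigma\in\Sigma_k$, we can rewrite it in $\monoid{T}(\Sigma)$ as $(\eta_\Sigma)_k(\sigma)\circ^{(n)}_k(t_1,\dots,t_k)$, and then preservation of $\circ$ together with $g'\circ\eta_\Sigma = f$ forces $g'_n(\sigma(t_1,\dots,t_k)) = f_k(\sigma)\circ^{(n)}_k(g'_n(t_1),\dots,g'_n(t_k))$, which matches the recursive definition of $g$. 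Induction on term structure then gives $g' = g$.
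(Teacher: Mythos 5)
Your proposal is correct and follows essentially the same route as the paper: the same recursive definition of $g$ on terms, the same key lemma that $g$ sends substitution to $\circ^{(n)}_k$ proved by induction on $s$ (using (CA1) in the variable case and (CA3) in the operation case), and the same uniqueness argument. You simply spell out details the paper leaves implicit, such as deriving $g\circ\eta_\Sigma=f$ from (CA2).
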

\begin{proof}
The clone homomorphism $g$ may be defined by recursion (recall that $T(\Sigma)$ was defined inductively) as follows:
\begin{enumerate}
\item for each $n\in\NN$ and $i\in\{1,\dots,n\}$, let
\[
g_n(x^{(n)}_i)=p^{(n)}_i;
\]
\item for each $k,n\in\NN$, $\sigma\in\Sigma_k$ and $t_1,\dots,t_k\in 
T(\Sigma)_n$, let 
\[
g_n(\sigma(t_1,\dots,t_k))=f_k(\sigma)\circ^{(n)}_k(g_n(t_1),\dots,g_n(t_k)).
\]
\end{enumerate}
To check that $g$ is indeed a clone homomorphism, it suffices to show 
for each $s\in T(\Sigma)_k$ and $t_1,\dots,t_k\in T(\Sigma)_n$,
\[
g_n(s[x^{(k)}_1\mapsto t_1,\dots,x^{(k)}_k\mapsto t_k])=
g_k(s)\circ^{(n)}_k(g_n(t_1),\dots, g_n(t_k));
\]
this can be shown by induction on $s$.
The uniqueness of $g$ is clear.
\end{proof}
The construction given in Definition~\ref{def:interp_Sigma_term}
is a special case of the above; let $\monoid{S}$ be $\Endcl{A}$.

Recall from Definition~\ref{def:eq_logic} the 
graded set $\overline{E}\subseteq T(\Sigma)\times T(\Sigma)$ 
of equational theorems of a presentation of an equational theory 
$\pres{\Sigma}{E}$.
The graded set $\overline{E}$ is an equivalence relation on $T(\Sigma)$, and hence we may consider the quotient graded set $T(\Sigma)/\overline{E}$ (as we did in the proof of Theorem~\ref{thm:eq_logic_sound_complete}).
By the rule ({\sc{Cong}}), the clone operations on $T(\Sigma)$
induce well-defined operations on $T(\Sigma)/\overline{E}$;
that is, $\overline{E}$ is not only an equivalence relation on the graded set $T(\Sigma)$, but it is also a congruence relation on the clone $\monoid{T}(\Sigma)$.
In particular, we can define $\circ^{(n)}_k$ on $T(\Sigma)/\overline{E}$ by
\[
[\phi]_{\overline{E}}\circ^{(n)}_k([\theta_1]_{\overline{E}},\dots,
[\theta_k]_{\overline{E}})= 
[\phi(\theta_1,\dots,\theta_k)]_{\overline{E}}.
\]
This makes the graded set $T(\Sigma)/\overline{E}$ into a clone;
the clone axioms for $T(\Sigma)/{\overline{E}}$ may be immediately checked from the existence of a surjective morphism of graded sets 
$q\colon T(\Sigma)\longrightarrow T(\Sigma)/\overline{E}$ (given by 
$\theta\longmapsto [\theta]_{\overline{E}}$) preserving 
the clone operations.
The resulting clone is denoted by $\monoid{T}^\pres{\Sigma}{E}$;
in words, it is the clone consisting of \emph{$\Sigma$-terms modulo equational theorems of $\pres{\Sigma}{E}$}.
It is also characterised by a universal property.
\begin{proposition}
\label{prop:quotient_clone}
Let $\pres{\Sigma}{E}$ be a presentation of an equational theory,
and let  $q\colon \monoid{T}(\Sigma)\longrightarrow 
\monoid{T}^\pres{\Sigma}{E}$ be the clone homomorphism defined by $q_n(\theta)=[\theta]_{\overline{E}}$
for each $n\in\NN$ and $\theta\in T(\Sigma)_n$.
Given any clone $\monoid{S}=(S,p,\circ)$ and a clone homomorphism 
$g\colon \monoid{T}(\Sigma)\longrightarrow \monoid{S}$ such that for any 
$t\approx_n s\in E$, $g_n(t)=g_n(s)$ holds, there exists a unique
clone homomorphism $h\colon 
\monoid{T}^\pres{\Sigma}{E}\longrightarrow\monoid{S}$
such that $h\circ q=g$.
\[
\begin{tikzpicture}[baseline=-\the\dimexpr\fontdimen22\textfont2\relax ]
      \node (TL) at (0,1)  {$\monoid{T}(\Sigma)$};
      \node (TR) at (3,1)  {$\monoid{T}^\pres{\Sigma}{E}$};
      \node (BR) at (3,-1) {$\monoid{S}$};
      \draw[->] (TL) to node[auto,labelsize](T) {$q$} (TR);
      \draw[->,dashed]  (TR) to node[auto,labelsize] {$h$} (BR);
      \draw[->]  (TL) to node[auto,swap,labelsize] {$g$} (BR);
\end{tikzpicture} 
\]
\end{proposition}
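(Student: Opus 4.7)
The plan is to define $h$ by $h_n([\theta]_{\overline{E}}) = g_n(\theta)$ for each $n\in\NN$ and $\theta\in T(\Sigma)_n$, and then verify the four standard tasks: well-definedness, compatibility with the clone structure, the factorisation $h\circ q = g$, and uniqueness.

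First I would establish well-definedness, which is the real content of the proposition. I need to show that whenever $\pres{\Sigma}{E}\vdash \theta\approx_n \theta'$, we have $g_n(\theta)=g_n(\theta')$. This proceeds by induction on $\pres{\Sigma}{E}$-proofs (Definition~\ref{def:eq_logic}), checking the five inference rules in turn. The rule ({\sc Ax}) is exactly the hypothesis on $g$. The rules ({\sc Refl}), ({\sc Sym}) and ({\sc Trans}) are immediate from equality being an equivalence relation. The crucial rule is ({\sc Cong}): given derivations of $s\approx_k s'$ and $t_i\approx_n t'_i$ for $i=1,\dots,k$, the induction hypothesis gives $g_k(s)=g_k(s')$ and $g_n(t_i)=g_n(t'_i)$. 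Since substitution in $T(\Sigma)$ is exactly the clone composition of $\monoid{T}(\Sigma)$, we have $s[x^{(k)}_1\mapsto t_1,\dots,x^{(k)}_k\mapsto t_k] = s\circ^{(k)}_n(t_1,\dots,t_k)$, and the fact that $g$ preserves $\circ$ yields
\[
g_n(s[\vec x\mapsto \vec t]) = g_k(s)\circ^{(n)}_k(g_n(t_1),\dots,g_n(t_k)) = g_k(s')\circ^{(n)}_k(g_n(t'_1),\dots,g_n(t'_k)) = g_n(s'[\vec x\mapsto \vec t\,']),
\]
as required. This is the step I expect to be the main (and essentially only) obstacle, since the other verifications are formal.

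Next, I would check that $h$ is a clone homomorphism. For projections, $h_n(p^{(n)}_i) = h_n([x^{(n)}_i]_{\overline{E}}) = g_n(x^{(n)}_i) = p^{(n)}_i$ in $\monoid{S}$, using that $g$ preserves projections. For composition, any element of $T^{\pres{\Sigma}{E}}_k$ (resp.\ $T^{\pres{\Sigma}{E}}_n$) is of the form $[\phi]_{\overline{E}}$ (resp.\ $[\theta_j]_{\overline{E}}$), and
\[
h_n([\phi]_{\overline{E}}\circ^{(n)}_k([\theta_1]_{\overline{E}},\dots,[\theta_k]_{\overline{E}})) = h_n([\phi\circ^{(n)}_k(\theta_1,\dots,\theta_k)]_{\overline{E}}) = g_n(\phi\circ^{(n)}_k(\theta_1,\dots,\theta_k)),
\]
which equals $g_k(\phi)\circ^{(n)}_k(g_n(\theta_1),\dots,g_n(\theta_k)) = h_k([\phi]_{\overline{E}})\circ^{(n)}_k(h_n([\theta_1]_{\overline{E}}),\dots,h_n([\theta_k]_{\overline{E}}))$ since $g$ is a clone homomorphism.

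Finally, the factorisation $h\circ q = g$ holds by construction, and uniqueness is immediate from the fact that $q$ is surjective on each grading: any clone homomorphism $h'$ with $h'\circ q = g$ must satisfy $h'_n([\theta]_{\overline{E}}) = h'_n(q_n(\theta)) = g_n(\theta) = h_n([\theta]_{\overline{E}})$, so $h' = h$.
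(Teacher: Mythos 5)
Your proof is correct and follows the same route as the paper's (which merely sketches it): define $h_n([\theta]_{\overline{E}})=g_n(\theta)$, establish well-definedness by induction on $\pres{\Sigma}{E}$-proofs---with ({\sc Cong}) handled via the fact that substitution is exactly the clone composition of $\monoid{T}(\Sigma)$ and that $g$ preserves it---and deduce uniqueness from the surjectivity of $q$. The only blemish is a typo in the ({\sc Cong}) step, where the composite should be written $s\circ^{(n)}_k(t_1,\dots,t_k)$ (as in your displayed equation), not $s\circ^{(k)}_n(t_1,\dots,t_k)$.
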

\begin{proof}
The clone homomorphism $h$ is given by 
$h_n([\theta]_{\overline{E}})=g_n(\theta)$;
this is shown to be well-defined by induction on $\pres{\Sigma}{E}$-proofs
(see Definition~\ref{def:eq_logic}).
The uniqueness of $h$ is immediate from the surjectivity of $q$.
\end{proof}

We can now show that for any presentation of an equational theory 
$\pres{\Sigma}{E}$,
to give a model of $\pres{\Sigma}{E}$ is equivalent to give a model of the clone
$\monoid{T}^\pres{\Sigma}{E}$.
A model of the clone $\monoid{T}^\pres{\Sigma}{E}$ 
(Definition~\ref{def:clone_model}) can be---by 
Proposition~\ref{prop:quotient_clone}---equivalently given as a suitable clone 
homomorphism out of 
$\monoid{T}(\Sigma)$; this in turn is---by 
Proposition~\ref{prop:free_clone}---equivalently given as a suitable morphism 
of graded sets 
out of $\Sigma$,
which is nothing but a model of the presentation of an equational theory 
$\pres{\Sigma}{E}$ (Definition~\ref{defn:univ_alg_model}).

We also remark that every clone is isomorphic to a clone of the form 
$\monoid{T}^\pres{\Sigma}{E}$
for some presentation of an equational theory $\pres{\Sigma}{E}$.
Indeed, given any clone $\monoid{S}=(S,p,\circ)$ we can consider its underlying graded set $S$ as a graded set of basic operations, and obtain the surjective clone homomorphism $\varepsilon_\monoid{S}\colon \monoid{T}(S)\longrightarrow \monoid{S}$ extending the identity morphism on $S$ by Proposition~\ref{prop:free_clone}. 
Define $E_\monoid{S}\subseteq T(S)\times T(S)$ to be the kernel of $\varepsilon_\monoid{S}$, i.e., the graded set of all pairs of elements of $T(S)$ whose images under $\varepsilon_\monoid{S}$ agree.
Then we have $\monoid{S}\cong\monoid{T}^\pres{S}{E_\monoid{S}}$.

\medskip

The inference rules of equational logic we have given in
Definition~\ref{def:eq_logic} can be understood as 
the inductive definition of the congruence relation 
$\overline{E}\subseteq T(\Sigma)\times T(\Sigma)$  
on the clone $\monoid{T}(\Sigma)$
generated by $E\subseteq T(\Sigma)\times T(\Sigma)$.
The notion of clone therefore provides conceptual understanding 
of equational logic. 

We can also shed new light on the soundness and completeness theorem 
(Theorem~\ref{thm:eq_logic_sound_complete}) for equational logic.
First we define a variant of the semantical consequence relation $\vDash$ (Definition~\ref{def:semantical_consequence_rel}) via the ``clone-valued semantics''.

\begin{definition}
\label{def:abstract_semantical_consequence_rel}
\begin{enumerate}
\item Let $\Sigma$ be a graded set, $\monoid{S}=(S,p,\circ)$ be a clone 
and $f\colon \Sigma\longrightarrow S$ be a morphism of graded set.
For any $\Sigma$-equation $t\approx_n s\in T(\Sigma)\times T(\Sigma)$,
we write 
\[
(\monoid{S},f)\vDash_\clo t\approx_n s 
\]
iff $g(t)=g(s)$,
where $g\colon \monoid{T}(\Sigma)\longrightarrow \monoid{S}$
is the clone homomorphism extending $f$ via Proposition~\ref{prop:free_clone}.
\item Let $\pres{\Sigma}{E}$ be a presentation of an equational theory.
For any $\Sigma$-equation $t\approx_n s\in T(\Sigma)\times T(\Sigma)$,
we write 
\[
\pres{\Sigma}{E}\vDash_\clo t\approx_n s 
\]
iff for any clone $\monoid{S}=(S,p,\circ)$ and a morphism of graded set 
$f\colon \Sigma\longrightarrow S$ such that 
$(\monoid{S},f)\vDash_\clo t'\approx_{n'} s'$ for all
$t'\approx_{n'} s'\in E$,
$(\monoid{S},f)\vDash_\clo t\approx_n s $.\qedhere
\end{enumerate}
\end{definition} 
\begin{thm}[cf.~Theorem~\ref{thm:eq_logic_sound_complete}]
\label{thm:abst_sound_complete}
Let $\pres{\Sigma}{E}$ be a presentation of an equational theory.
\begin{enumerate}
\item (Soundness with respect to the clone-valued semantics) Let $t\approx_n s\in T(\Sigma)\times T(\Sigma)$.
If $\pres{\Sigma}{E}\vdash t\approx_n s$ then $\pres{\Sigma}{E}\vDash_\clo 
t\approx_n s$.
\item (Completeness with respect to the clone-valued semantics) Let $t\approx_n s\in T(\Sigma)\times T(\Sigma)$.
If $\pres{\Sigma}{E}\vDash_\clo t\approx_n s$ then $\pres{\Sigma}{E}\vdash 
t\approx_n s$.
\end{enumerate}
\end{thm}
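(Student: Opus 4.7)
The plan is to deduce both halves of the theorem from the universal properties established in Propositions~\ref{prop:free_clone} and~\ref{prop:quotient_clone}, handling soundness by a closure-under-inference-rules argument and completeness by exhibiting a single universal witness, namely the clone $\monoid{T}^\pres{\Sigma}{E}$ together with the canonical morphism $q\circ\eta_\Sigma\colon\Sigma\longrightarrow T^\pres{\Sigma}{E}$.

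For soundness, I would fix a clone $\monoid{S}=(S,p,\circ)$ and a morphism of graded sets $f\colon\Sigma\longrightarrow S$ such that $(\monoid{S},f)\vDash_\clo t'\approx_{n'} s'$ for every $t'\approx_{n'} s'\in E$, and let $g\colon\monoid{T}(\Sigma)\longrightarrow\monoid{S}$ denote the clone homomorphism extending $f$ furnished by Proposition~\ref{prop:free_clone}. Then I would define the graded relation $K\subseteq T(\Sigma)\times T(\Sigma)$ by declaring $(t,s)\in K_n$ iff $g_n(t)=g_n(s)$, and verify that $K$ is closed under all five inference rules. Closure under (\textsc{Refl}), (\textsc{Sym}) and (\textsc{Trans}) is immediate because each $K_n$ is the kernel of a function; closure under (\textsc{Ax}) is the standing hypothesis on $f$; and closure under (\textsc{Cong}) follows because $g$ is a clone homomorphism and substitution in $T(\Sigma)$ is precisely the clone composition on $\monoid{T}(\Sigma)$. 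A straightforward induction on the height of a $\pres{\Sigma}{E}$-proof then places every equational theorem in $K$, which is exactly what $(\monoid{S},f)\vDash_\clo t\approx_n s$ asserts.

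For completeness, I would take $\monoid{S}$ to be $\monoid{T}^\pres{\Sigma}{E}$ and $f$ to be $q\circ\eta_\Sigma$, where $q$ is the quotient homomorphism of Proposition~\ref{prop:quotient_clone} and $\eta_\Sigma$ is the unit of Proposition~\ref{prop:free_clone}. By the uniqueness clause of Proposition~\ref{prop:free_clone}, the clone homomorphism extending $f$ is $q$ itself, because $q$ is already a clone homomorphism satisfying $q\circ\eta_\Sigma=f$ by construction. For each axiom $t'\approx_{n'} s'\in E$ we have $q_{n'}(t')=[t']_{\overline{E}}=[s']_{\overline{E}}=q_{n'}(s')$ thanks to rule (\textsc{Ax}), so the pair $(\monoid{T}^\pres{\Sigma}{E},\,q\circ\eta_\Sigma)$ qualifies as a test witness in Definition~\ref{def:abstract_semantical_consequence_rel}(2). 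Instantiating the hypothesis $\pres{\Sigma}{E}\vDash_\clo t\approx_n s$ at this witness yields $[t]_{\overline{E}}=[s]_{\overline{E}}$, which by Definition~\ref{def:eq_logic}(2) is exactly $\pres{\Sigma}{E}\vdash t\approx_n s$.

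The only step requiring any real care is identifying the extension of $q\circ\eta_\Sigma$ with $q$ itself in the completeness argument; once this is nailed down via the uniqueness part of Proposition~\ref{prop:free_clone}, the theorem reduces to universal properties already established. The soundness half involves no serious obstacle beyond unwinding the clone structure on $\monoid{T}(\Sigma)$ to see that any clone homomorphism respects (\textsc{Cong}); compared with the original Theorem~\ref{thm:eq_logic_sound_complete}, the completeness argument here is noticeably cleaner, since the syntactic clone $\monoid{T}^\pres{\Sigma}{E}$ serves as a universal witness on its own, without first having to extract an $n$-ary slice into a set-valued model.
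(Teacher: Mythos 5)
Your proof is correct and takes essentially the same route as the paper: the completeness half uses the identical witness $(\monoid{T}^\pres{\Sigma}{E},\, q\circ\eta_\Sigma)$, and your careful identification of the clone homomorphism extending $q\circ\eta_\Sigma$ with $q$ itself (via the uniqueness clause of Proposition~\ref{prop:free_clone}) is precisely the point the paper leaves implicit. For soundness the paper simply invokes Proposition~\ref{prop:quotient_clone}, whose own proof is the same induction on $\pres{\Sigma}{E}$-proofs that you carry out inline by checking that the kernel of $g$ is closed under the five inference rules, so the two arguments coincide in substance.
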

\begin{proof}
The soundness theorem with respect to the clone-valued semantics follows from 
Proposition~\ref{prop:quotient_clone}.
For the completeness theorem with respect to the clone-valued semantics, consider the 
clone $\monoid{T}^\pres{\Sigma}{E}$ and the morphism of graded set
\[
\begin{tikzpicture}[baseline=-\the\dimexpr\fontdimen22\textfont2\relax ]
      \node (1) at (0,0)  {$\Sigma$};
      \node (2) at (2,0)  {$T(\Sigma)$};
      \node (3) at (4.5,0) {$T(\Sigma)/\overline{E}$};
      \draw[->] (1) to node[auto,labelsize]{$\eta_\Sigma$} (2);
      \draw[->] (2) to node[auto,labelsize] {$q$} (3);
\end{tikzpicture} 
\]
(see Proposition~\ref{prop:free_clone} for the definition of $\eta_\Sigma$
and Proposition~\ref{prop:quotient_clone} for $q$);
then $(\monoid{T}^\pres{\Sigma}{E},q\circ \eta_\Sigma)\vDash_\clo t\approx_n s$
iff $\pres{\Sigma}{E}\vdash t\approx_n s$.
\end{proof}
Clearly, $\pres{\Sigma}{E}\vDash_\clo t\approx_n s$ 
implies $\pres{\Sigma}{E}\vDash t\approx_n s$;
the latter amounts to restricting the clone $\monoid{S}$
in Definition~\ref{def:abstract_semantical_consequence_rel}
to those of the form $\Endcl{A}$ for some set $A$.
Hence the (original) soundness theorem follows from the soundness theorem with respect to the clone-valued semantics,
but observe that the completeness theorem is not an immediate consequence of 
the completeness theorem with respect to the clone-valued semantics.\footnote{However, one can combine the completeness theorem with respect to the clone-valued semantics with an \emph{embedding theorem for clones}, which claims that every clone $\monoid{S}$ can be embedded into a product of clones of the form $\End(A)$, to obtain the completeness theorem. Such an embedding may be obtained, for example, by canonically mapping an arbitrary clone $\monoid{S}=(S,p,\circ)$ into $\prod_{i=0}^\infty\End{(S_n)}$, whose injectivity can be checked by an argument similar to our proof of the completeness theorem (Theorem~\ref{thm:eq_logic_sound_complete}).}

\bibliographystyle{alpha} %
\bibliography{myref} %

\end{document}